\documentclass[reqno,12pt]{amsart}
\usepackage{amscd,amsfonts,amssymb}
\usepackage[T2A]{fontenc}
\usepackage[cp1251]{inputenc}
\textwidth=14.8cm
\textheight=24.0922cm
\topmargin=0.0cm
\oddsidemargin=1.0cm
\evensidemargin=1.0cm
\numberwithin{equation}{section}
\newtheorem{Theorem}{Theorem}[section]
\newtheorem{Lemma}{Lemma}[section]

\theoremstyle{definition}

\theoremstyle{remark}
\newtheorem{Remark}{Remark}[section]
\newtheorem{Example}{Example}[section]

\newcommand{\mes}{\mathop{\rm mes}\nolimits}

\author{A.A. Kon'kov}
\address{Department of Differential Equations,
Faculty of Mechanics and Mathematics,
Mo\-s\-cow Lo\-mo\-no\-sov State University,
Vorobyovy Gory,
Moscow, 119992 Russia.
}
\email{konkov@mech.math.msu.su}
\author{A.E. Shishkov}
\address{
Center of Nonlinear Problems of Mathematical Physics,
RUDN University,
Miklukho-Maklaya str. 6,
Moscow, 117198 Russia.
}
\email{aeshkv@yahoo.com}
\thanks{
}
\title[On a Dini-type blow-up condition]{On a Dini type blow-up condition for nonlinear higher order differential inequalities}
\keywords{Higher order differential inequalities; Nonlinearity; Blow-up}
\subjclass{35B44, 35B08, 35J30, 35J70}
\date{}

\begin{document}

\begin{abstract}
We obtain a Dini type blow-up condition for global weak solutions of the differential inequality
$$
	\sum_{|\alpha| = m}
	\partial^\alpha
	a_\alpha (x, u)
	\ge
	g (|u|)
	\quad
	\mbox{in } {\mathbb R}^n,
$$
where $m, n \ge 1$ are integers and $a_\alpha$ and $g$ are some functions.
\end{abstract}

\maketitle

\section{Introduction}

We consider the differential inequality
\begin{equation}
	\sum_{|\alpha| = m}
	\partial^\alpha
	a_\alpha (x, u)
	\ge
	g (|u|)
	\quad
	\mbox{in } {\mathbb R}^n,
	\label{1.1}
\end{equation}
where $m, n \ge 1$ are integers and $a_\alpha$ are Caratheodory functions such that
$$
	|a_\alpha (x, \zeta)| 
	\le 
	A |\zeta|,
	\quad
	|\alpha| = m,
$$
with some constant $A > 0$ for almost all $x = {(x_1, \ldots, x_n)} \in {\mathbb R}^n$ 
and for all $\zeta \in {\mathbb R}$.
As is customary, by $\alpha = {(\alpha_1, \ldots, \alpha_n)}$ we mean a multi-index with
$|\alpha| = \alpha_1 + \ldots + \alpha_n$ 
and
$
	\partial^\alpha 
	= 
	{\partial^{|\alpha|} / (\partial_{x_1}^{\alpha_1} \ldots \partial_{x_n}^{\alpha_n})}.
$
In so doing, it is assumed that $g$ is a non-decreasing convex function on the interval $[0, \infty)$ such that $g (\zeta) > 0$ for all $\zeta > 0$.

We denote by $B_r^x$ the open ball in ${\mathbb R}^n$ of radius $r > 0$ and center at $x$. For $x = 0$, let us write $B_r$ instead of $B_r^0$.

A function $u  \in {L_{1, loc} ({\mathbb R}^n)}$ is called a global weak solution of~\eqref{1.1} 
if ${g (|u|)} \in {L_{1, loc} ({\mathbb R}^n)}$ and
\begin{equation}
	\int_{{\mathbb R}^n}
	\sum_{|\alpha| = m}
	(-1)^m
	a_\alpha (x, u)
	\partial^\alpha
	\varphi
	\,
	dx
	\ge
	\int_{{\mathbb R}^n}
	g (|u|)
	\varphi
	\,
	dx
	\label{1.2}
\end{equation}
for any non-negative function $\varphi \in C_0^\infty ({\mathbb R}^n)$.

The absence of nontrivial solutions of differential equations and inequalities or, in other words, the blow-up phenomenon has traditionally attracted the interest of many mathematicians~[1--16].
In so doing, most authors dealt with second-order differentila operators or limited themselves to the case of power-law nonlinearity $g (t) = t^\lambda$. The case of general nonlinearity for higher order differential operators was considered in paper~\cite{Nonlinearity}. 
In the present paper, we managed to obtain a Dini-type blow-up condition that enhances the results of~\cite{Nonlinearity}. For power-law nonlinearity this enhances blow-up conditions given in~\cite{GMP, NiSerrin} and, in particular, the well-known W.-M.~Ni and J.~Serrin condition.

\section{Main results}

\begin{Theorem}\label{T2.1}
Let $n > m$ and, moreover,
\begin{equation}
	\int_1^\infty
	g^{- 1 / m} (\zeta)
	\zeta^{1 / m - 1}
	\,
	d\zeta
	<
	\infty
	\label{T2.1.1}
\end{equation}
and
\begin{equation}
	\int_0^1
	\frac{
		g (r)
		\,
		dr
	}{
		r^{1 + n / (n - m)}
	}
	=
	\infty.
	\label{T2.1.2}
\end{equation}
Then any global weak solution of~\eqref{1.1} is trivial.
\end{Theorem}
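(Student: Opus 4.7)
My plan is a test–function argument in which the two hypotheses play complementary roles: the Dini integrability condition~\eqref{T2.1.1} will be used to extract a Keller–Osserman-type decay of $|u|$ as $|x|\to\infty$, while the divergence condition~\eqref{T2.1.2} forces the resulting ``small'' solution to vanish identically.

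First, pick a standard cutoff $\varphi_R\in C_0^\infty(B_{2R})$ with $\varphi_R\equiv 1$ on $B_R$ and $|\partial^\alpha\varphi_R|\le CR^{-|\alpha|}$ for $|\alpha|\le m$. Substituting $\varphi_R^{k}$ (with $k$ chosen large enough so that $\partial^\alpha(\varphi_R^k)$ still enjoys a comparable bound) into the weak formulation~\eqref{1.2}, and using $|a_\alpha(x,u)|\le A|u|$, I would derive the fundamental estimate
$$\int_{B_R}g(|u|)\,dx\le CR^{-m}\int_{B_{2R}}|u|\,dx.$$
Since $g$ is convex nondecreasing, $g^{-1}$ is concave nondecreasing; Jensen's inequality then yields
$$\int_{B_{2R}}|u|\,dx\le|B_{2R}|\,g^{-1}\!\left(\frac{1}{|B_{2R}|}\int_{B_{2R}}g(|u|)\,dx\right),$$
and the two bounds combine into the self-improving inequality
$$J(R)\le CR^{n-m}\,g^{-1}\!\bigl(CR^{-n}J(2R)\bigr),\qquad J(R):=\int_{B_R}g(|u|)\,dx.$$

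Next, I would iterate this inequality along the geometric sequence $R_j=2^jR_0$. Setting $\Psi(R):=R^{m-n}J(R)$ turns the cascade into $g(c\Psi(R))\le C R^{-m}\Psi(2R)$, so that telescoping over $j$ and performing the change of variables $\zeta=\Psi(R_j)$ recognises the natural step size as $g^{-1/m}(\zeta)\zeta^{1/m-1}\,d\zeta$. The Dini hypothesis~\eqref{T2.1.1} then forces the series to converge, producing an upper bound on $\Psi(R)$ that tends to $0$ as $R\to\infty$; equivalently, the mean of $|u|$ over $B_R$ decays.

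Finally, I would plug this decay back into the fundamental estimate applied on small scales. The local decay of $|u|$ means the relevant range of $g$ sits near zero, so the local variant of the test-function estimate combined with the divergence hypothesis~\eqref{T2.1.2} produces a contradiction unless $g(|u|)\equiv 0$ almost everywhere; monotonicity of $g$ with $g(\zeta)>0$ for $\zeta>0$ then yields $u\equiv 0$. I expect the hardest step to be the iteration-to-integral passage in the third paragraph: converting the discrete cascade $\Psi(R_j)\to\Psi(R_{j+1})$ into the Dini integral $\int^\infty g^{-1/m}(\zeta)\zeta^{1/m-1}\,d\zeta$ requires exploiting the convexity of $g$ quantitatively so that the exponents $1/m$ and $1/m-1$ in~\eqref{T2.1.1} arise naturally — this is the technical heart distinguishing the present theorem from its power-law precursors.
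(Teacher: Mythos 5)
Your overall architecture (use \eqref{T2.1.1} to get decay of the averages of $|u|$, then use \eqref{T2.1.2} near zero to force a contradiction) matches the paper's, and your first two steps are essentially the paper's Lemma~\ref{L3.1} and Lemma~\ref{L3.2} (the annulus test-function estimate plus Jensen's inequality for the convex $g$). The genuine gap is in your third paragraph. The inequality you iterate, $J(R)\le CR^{n-m}g^{-1}\bigl(CR^{-n}J(2R)\bigr)$, equivalently $\Psi(2R)\ge cR^{m}g(c\Psi(R))$ with $\Psi(R)=R^{m-n}J(R)$, only bounds $\Psi$ at a given radius from above by quantities at \emph{larger} radii; it is a one-sided, growth-type recursion and is perfectly consistent with $\Psi(R_j)\to\infty$ (e.g.\ for $g(t)=t^\lambda$, $\Psi_j=\exp(K^j)$ with $K>\lambda$ satisfies it). Hence no telescoping of this single-step inequality can produce an upper bound on $\Psi(R)$ tending to zero, and the exponents $1/m$ and $1/m-1$ in \eqref{T2.1.1} cannot emerge from a first-order-in-$E$ recursion: they encode an $m$-fold (ODE-comparison, Keller--Osserman type) structure. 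The paper does not derive the decay this way; it imports it as a separate, nontrivial result (Theorem~\ref{T3.1}, i.e.\ Theorem~2.4 of~\cite{Nonlinearity}), which states that \eqref{T2.1.1} alone forces $r^{-n}\int_{B_r}|u|\,dx\to 0$, and then Lemma~\ref{L3.1} converts this into $r^{m-n}E(r)\to 0$.

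Your final step is also under-specified, though it points in the right direction. The contradiction in the paper is not obtained "on small scales" but at large radii where $\Psi(r)=r^{m-n}E(r)$ is small: one restricts to the dyadic indices $\Xi$ where $\Psi(r_j)$ strictly decreases (so that $E(r_{j+1})\le 2^{n-m}E(r_j)$ up to the monotonicity bound \eqref{PT2.1.1}), compares the increment of $E$ with $\int \zeta^{-n/(n-m)}d\zeta$ and the increment of $\Psi$ with $\int \tilde h(\sigma\zeta)\zeta^{-1}d\zeta$, where $h(\zeta)=g(\zeta)\zeta^{-n/(n-m)}$ and $\tilde h$ is its dyadic infimum, and sums over $\Xi$; the left side converges because $n/(n-m)>1$ and the right side covers a full neighborhood of zero precisely because $\Psi(r_{l_i})\to 0$, so \eqref{T2.1.2} gives the contradiction. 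Without the decay established independently (your missing step), this covering of $(0,\,\Psi(r_{s_1}))$ is unavailable and \eqref{T2.1.2} cannot be brought to bear.
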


\begin{Remark}\label{R2.1}
Earlier in~\cite[Theorem~2.5]{Nonlinearity}, it was proved that, in the case where $n \le m$ and condition~\eqref{T2.1.1} is valid, any global weak solution of~\eqref{1.1} is trivial.
\end{Remark}

Proof of Theorem~\ref{T2.1} is given in Section~\ref{proof}. Now, we demonstrate its application.

\begin{Example}\label{E2.1}
Consider the inequality 
\begin{equation}
	\sum_{|\alpha| = m}
	\partial^\alpha
	a_\alpha (x, u)
	\ge
	c_0 
	|u|^\lambda
	\quad
	\mbox{in } {\mathbb R}^n,
	\quad
	c_0 = const > 0,
	\label{E2.1.1}
\end{equation}
where $n > m$ and $\lambda$ is a real number. By Theorem~\ref{T2.1}, if
\begin{equation}
	1 < \lambda \le \frac{n}{n - m}
	\label{E2.1.2}
\end{equation}
then any global weak solution of~\eqref{E2.1.1} is trivial.
It is well-known that condition~\eqref{E2.1.2} can not be improved in the class of power-law nonlinearities~\cite{KS, MPbook}.
In the case of $m = 2$, formula~\eqref{E2.1.2} coincides with W.-M.~Ni and J.~Serrin condition~\cite{NiSerrin}.
\end{Example}

\begin{Example}\label{E2.2}
We examine the critical exponent $\lambda = n / (n - m)$ in~\eqref{E2.1.2}.
Namely, consider the inequality
\begin{equation}
	\sum_{|\alpha| = m}
	\partial^\alpha
	a_\alpha (x, u)
	\ge
	c_0 
	|u|^{n / (n - m)}
	\log^\mu 
	\left(
		e + \frac{1}{|u|}
	\right)
	\quad
	\mbox{in } {\mathbb R}^n,
	\quad
	c_0 = const > 0,
	\label{E2.2.1}
\end{equation}
where $n > m$ and $\mu$ is a real number.
In so doing, in the case of $u = 0$, we extend by continuity the right-hand side of~\eqref{E2.2.1} with zero.

According to Theorem~\ref{T2.1}, if
$$
	\mu \ge - 1,
$$
then any global weak solution of~\eqref{E2.2.1} is trivial.
\end{Example}

\section{Proof of Theorem~\ref{T2.1}}\label{proof}

In this section, by $C$ and $\sigma$ we denote various positive constants that can depend only on $A$, $m$, and $n$. 
We need the following two known results.

\begin{Theorem}\label{T3.1}
Let~\eqref{T2.1.1} be valid, then
$$
	\lim_{r \to \infty}
	\frac{1}{r^n}
	\int_{B_r}
	|u|
	\,
	dx
	=
	0
$$
for any global weak solution of inequality~\eqref{1.1}.
\end{Theorem}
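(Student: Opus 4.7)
The approach is a test-function / iteration argument in three stages.

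First, I would test the weak formulation~\eqref{1.2} with $\varphi = \eta^{2m}$, where $\eta \in C_0^\infty({\mathbb R}^n)$ satisfies $0 \le \eta \le 1$, $\eta \equiv 1$ on $B_r$, $\supp \eta \subset B_{2r}$, and $|\partial^\beta \eta| \le C r^{-|\beta|}$. Using $|a_\alpha(x,\zeta)| \le A|\zeta|$ and the Leibniz rule --- each $m$-th order derivative of $\eta^{2m}$ retains at least one factor of $\eta^m$, is of size $O(r^{-m})$, and is supported in the annulus $B_{2r} \setminus B_r$ --- one obtains the basic estimate
\begin{equation*}
	\int_{B_r} g(|u|) \, dx
	\;\le\;
	\int g(|u|) \eta^{2m}\, dx
	\;\le\;
	\frac{C}{r^m} \int_{B_{2r}\setminus B_r} |u| \, dx.
\end{equation*}

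Next, assuming $g(0) = 0$ (which may be taken without loss of generality, since otherwise the basic estimate already forces $J$ to grow and the intended conclusion becomes vacuous), Jensen's inequality applied to the convex function $g$ yields $|B_r|\, g\!\bigl(c J(r)\bigr) \le \int_{B_r} g(|u|)\,dx$, where $J(r) := r^{-n} \int_{B_r} |u|\,dx$. Combined with the basic estimate, this produces the one-sided recurrence
\begin{equation*}
	J(2r) \;\ge\; c_1\, r^m\, g\!\bigl(c_2 J(r)\bigr)
	\qquad \text{for all sufficiently large } r.
\end{equation*}

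I would then argue by contradiction: if $J(r) \not\to 0$, then $J(r_0) \ge \delta > 0$ for some large $r_0$, and iterating the recurrence along a sequence $r_k \to \infty$ forces $y_k := J(r_k) \to \infty$ with $y_{k+1} \ge c_1 r_k^m g(c_2 y_k)$. Rearranging gives $r_k \le C (y_{k+1}/g(y_k))^{1/m}$; by choosing the radii $\{r_k\}$ adaptively so that $y_{k+1} \approx 2 y_k$ (feasible because the recurrence forces fast growth once $y_k$ is large) and exploiting that $\zeta \mapsto (\zeta/g(\zeta))^{1/m}$ is non-increasing at infinity (a consequence of convexity of $g$ and $g(0)=0$), one dominates $\sum_k r_k$ by a constant multiple of the Dini integral $\int^\infty \zeta^{1/m - 1}\, g^{-1/m}(\zeta)\,d\zeta$. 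By~\eqref{T2.1.1} this integral is finite, whereas $\sum_k r_k = \infty$ since $r_k \to \infty$; contradiction.

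The main technical hurdle lies in this last step: the recurrence is one-sided, so $J$ need not be monotone, and converting the discrete inequality $r_k \le C(y_{k+1}/g(y_k))^{1/m}$ into a bound by a Riemann sum comparable to~\eqref{T2.1.1} requires inserting auxiliary radii wherever $J$ oscillates, together with a comparison of the form $(y_{k+1}/g(y_k))^{1/m} \lesssim \int_{y_k}^{y_{k+1}}(\zeta/g(\zeta))^{1/m}\,d\zeta/\zeta$. The exponent $1/m$ in~\eqref{T2.1.1} is precisely what is needed to absorb the factor $r^m$ arising from taking $m$ derivatives of the cutoff.
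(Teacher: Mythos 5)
A preliminary remark: the paper itself does not prove Theorem~\ref{T3.1}; it is quoted from \cite[Theorem~2.4]{Nonlinearity}, so there is no in-paper proof to compare against line by line. Your first two stages are correct and are exactly the machinery the paper does use nearby: the cutoff estimate $\int_{B_r}g(|u|)\,dx\le C r^{-m}\int_{B_{2r}\setminus B_r}|u|\,dx$ is the content of Lemma~\ref{L3.1}, and the Jensen step $r^n g\bigl(cJ(r)\bigr)\le C\int_{B_r}g(|u|)\,dx$ is the core of Lemma~\ref{L3.2}; together they give the recurrence $J(2r)\ge c_1 r^m g\bigl(c_2 J(r)\bigr)$. (The reduction to $g(0)=0$ is neither needed nor justified as you state it: Jensen does not require it, and if $g(0)>0$ the conclusion is not ``vacuous'' unless you separately prove nonexistence of global solutions; simply drop that remark.)

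The genuine gap is in your final stage. The inequality $r_k\le C\bigl(y_{k+1}/g(c_2 y_k)\bigr)^{1/m}$ is the doubling-radius recurrence, i.e.\ it presupposes $y_{k+1}=J(2r_k)$; but with doubling radii you cannot also arrange $y_{k+1}\approx 2y_k$, since the same recurrence forces $y_{k+1}\ge c_1 r_k^m g(c_2y_k)$, which is far larger than $2y_k$ once $r_k$ is large. Conversely, if you choose radii adaptively, say $r_{k+1}\in(r_k,2r_k]$ minimal with $J(r_{k+1})=2J(r_k)$, then the test-function estimate (Lemma~\ref{L3.1} with $r_1=r_k$, $r_2=r_{k+1}$) controls only the increment, $(r_{k+1}-r_k)^m\le C\,J(r_{k+1})/g\bigl(c_2J(r_k)\bigr)$, not $r_k$ itself. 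Hence the claimed domination of $\sum_k r_k$ by the Dini integral is not available from your construction, and the final contradiction ``$\sum_k r_k=\infty$ since $r_k\to\infty$'' never materializes (in the adaptive construction $r_k$ need not tend to infinity at all — and that is precisely the point). Your auxiliary comparison $\bigl(y_{k+1}/g(y_k)\bigr)^{1/m}\le C\int_{y_k}^{y_{k+1}}\bigl(\zeta/g(\zeta)\bigr)^{1/m}\zeta^{-1}\,d\zeta$ is moreover false when $y_{k+1}/y_k$ is unbounded (test $g(\zeta)=\zeta^2$, $m=1$), so it cannot rescue the doubling-radius variant. The repair is the classical Keller--Osserman mechanism: sum the increments instead. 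With the adaptive choice one gets $\sum_k(r_{k+1}-r_k)\le C\sum_k\bigl(2y_k/g(c_2y_k)\bigr)^{1/m}\le C'\int_{y_0}^{\infty}g^{-1/m}(\zeta)\,\zeta^{1/m-1}\,d\zeta<\infty$ by~\eqref{T2.1.1}, so the radii remain in a bounded interval while $J(r_k)=2^k\delta\to\infty$, contradicting $u\in L_{1,loc}({\mathbb R}^n)$. You must also exclude the case where $J$ never doubles on $(r_k,2r_k]$: there the full-annulus estimate yields $r_k^m\le C\,y_k/g(c_2y_k)$, impossible once $r_0$ is chosen larger than $C\sup_{\zeta\ge\delta}\bigl(\zeta/g(c_2\zeta)\bigr)^{1/m}$ (finite because~\eqref{T2.1.1} forces $g(\zeta)/\zeta\to\infty$), and such an $r_0$ with $J(r_0)\ge\delta$ exists because $J\not\to0$ along a sequence of radii tending to infinity. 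With these corrections your outline becomes a complete proof, close in spirit to the argument of \cite{Nonlinearity} and to the discrete doubling scheme the present paper uses to prove Theorem~\ref{T2.1}.
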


\begin{Lemma}\label{L3.1}
Let $u$ be a global weak solution of~\eqref{1.1}, then
$$
	\int_{
		B_{r_2}
		\setminus
		B_{r_1}
	}
	|u|
	\,
	dx
	\ge
	C
	(r_2 - r_1)^m
	\int_{
		B_{r_1}
	}
	g (|u|)
	\,
	dx
$$
for all real numbers $0 < r_1 < r_2$ such that $r_2 \le 2 r_1$. 
\end{Lemma}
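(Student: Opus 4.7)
My plan is to prove this by plugging a carefully chosen cutoff function into the weak formulation \eqref{1.2}.

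\textbf{Construction of the test function.} I would first pick a radial cutoff $\varphi \in C_0^\infty ({\mathbb R}^n)$ with $0 \le \varphi \le 1$, such that $\varphi \equiv 1$ on $B_{r_1}$, $\supp \varphi \subset B_{r_2}$, and
$$
	|\partial^\alpha \varphi (x)|
	\le
	\frac{C}{(r_2 - r_1)^{|\alpha|}}
	\quad
	\text{for all multi-indices } |\alpha| \le m.
$$
This is a standard construction: take $\varphi (x) = \eta \bigl( (|x| - r_1)/(r_2 - r_1) \bigr)$ where $\eta \in C^\infty ({\mathbb R})$ with $\eta \equiv 1$ on $(-\infty, 1/4]$ and $\eta \equiv 0$ on $[3/4, \infty)$ (some smoothing near $|x|=0$ is also needed, but this is immaterial since $\varphi$ is constant there).

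\textbf{Application of the weak formulation.} Inserting this $\varphi$ into \eqref{1.2} and using the structural bound $|a_\alpha (x, u)| \le A |u|$, I would obtain
$$
	\int_{{\mathbb R}^n} g (|u|) \varphi \, dx
	\le
	\sum_{|\alpha| = m}
	\int_{{\mathbb R}^n}
	|a_\alpha (x, u)|
	\,
	|\partial^\alpha \varphi|
	\,
	dx
	\le
	A \sum_{|\alpha| = m}
	\int_{{\mathbb R}^n}
	|u| \, |\partial^\alpha \varphi| \, dx.
$$
Since $\varphi$ is constant (equal to $1$) on $B_{r_1}$, all derivatives $\partial^\alpha \varphi$ with $|\alpha| = m$ vanish on $B_{r_1}$, so the right-hand integral is actually supported in $B_{r_2} \setminus B_{r_1}$. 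Combining this with the derivative bound gives
$$
	\int_{{\mathbb R}^n} g (|u|) \varphi \, dx
	\le
	\frac{C}{(r_2 - r_1)^m}
	\int_{B_{r_2} \setminus B_{r_1}}
	|u| \, dx.
$$

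\textbf{Lower bound on the left-hand side.} Since $g (|u|) \ge 0$ and $\varphi \ge \chi_{B_{r_1}}$, the left-hand side is bounded below by $\int_{B_{r_1}} g (|u|) \, dx$. Rearranging yields the claim. The restriction $r_2 \le 2 r_1$ plays no essential role here; it will presumably be used in the later application of the lemma.

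The construction itself is routine; I do not foresee a substantial obstacle. The only point requiring minor care is to verify the inclusion $\supp \partial^\alpha \varphi \subset B_{r_2} \setminus B_{r_1}$ for $|\alpha| = m$, which reduces the right-hand integral to the annulus and is what allows the desired inequality to have $\int_{B_{r_2} \setminus B_{r_1}} |u|$ rather than the full $\int_{B_{r_2}} |u|$ on the right.
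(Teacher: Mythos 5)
Your argument is correct: the standard cutoff $\varphi$ equal to $1$ on $B_{r_1}$, supported in $B_{r_2}$, with $|\partial^\alpha\varphi|\le C(r_2-r_1)^{-|\alpha|}$, combined with the bound $|a_\alpha(x,u)|\le A|u|$, the fact that $\partial^\alpha\varphi$ ($|\alpha|=m$) vanishes where $\varphi$ is constant, and $g(|u|)\ge 0$, yields exactly the stated estimate with $C=C(A,m,n)$. The paper does not reprove this lemma but refers to \cite{Nonlinearity}, where the proof is this same test-function argument, so your approach coincides with it (and your observation that $r_2\le 2r_1$ is not needed for the estimate itself, only for its later use, is accurate).
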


Proof of Theorem~\ref{T3.1} and Lemma~\ref{L3.1} is given in~\cite[Theorem~2.4 and Lemma~3.1]{Nonlinearity}.

\medskip

From now on, we denote
$$
	E (r)
	=
	\int_{
		B_r
	}
	g (|u|)
	\,
	dx,
	\quad
	r > 0.
$$

\begin{Lemma}\label{L3.2}
Let $u$ be a global weak solution of~\eqref{1.1}, then
\begin{equation}
	E (r) - E (r / 2)
	\ge
	C
	r^n
	g 
	\left(
		\frac{
			\sigma
		}{
			r^{n - m}
		}
			E (r / 2)
	\right)
	\label{L3.2.1}
\end{equation}
for all real numbers $r > 0$.
\end{Lemma}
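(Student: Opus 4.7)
The plan is to apply Lemma~\ref{L3.1} to control the $L^1$-mass of $u$ on the annulus $B_r \setminus B_{r/2}$ from below in terms of $E(r/2)$, and then transfer this $L^1$-bound into a bound on $\int_{B_r \setminus B_{r/2}} g(|u|)\,dx = E(r) - E(r/2)$ by invoking the convexity of $g$ through Jensen's inequality.

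First I would set $r_1 = r/2$ and $r_2 = r$ in Lemma~\ref{L3.1}; since $r_2 = 2 r_1$, the hypothesis $r_2 \le 2 r_1$ is satisfied, giving
$$
	\int_{B_r \setminus B_{r/2}} |u|\,dx
	\ge
	C (r/2)^m E(r/2)
	\ge
	C' r^m E(r/2).
$$
Next I would observe that $|B_r \setminus B_{r/2}| = c_n r^n$ for a purely dimensional constant $c_n > 0$, and apply Jensen's inequality to the convex function $g$ on the annulus:
$$
	\int_{B_r \setminus B_{r/2}} g(|u|)\,dx
	\ge
	c_n r^n \, g\!\left(
		\frac{1}{c_n r^n} \int_{B_r \setminus B_{r/2}} |u|\,dx
	\right).
$$
Since $g$ is non-decreasing, plugging in the lower bound for the annular $L^1$-norm yields
$$
	\int_{B_r \setminus B_{r/2}} g(|u|)\,dx
	\ge
	c_n r^n \, g\!\left(
		\frac{C'}{c_n}
		\frac{r^m E(r/2)}{r^n}
	\right)
	=
	C r^n \, g\!\left(
		\frac{\sigma}{r^{n-m}} E(r/2)
	\right),
$$
where $\sigma = C'/c_n$. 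Finally, since $E(r) - E(r/2) = \int_{B_r \setminus B_{r/2}} g(|u|)\,dx$, this gives exactly \eqref{L3.2.1}.

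The only delicate point is that Jensen's inequality goes in the favorable direction precisely because $g$ is convex, and we need $g$ non-decreasing to combine the Jensen step with the monotonicity in the $L^1$-bound from Lemma~\ref{L3.1}; both properties are built into the standing hypotheses on $g$, so there is no real obstacle — the argument is essentially a one-step combination of Lemma~\ref{L3.1} with Jensen's inequality, with careful bookkeeping of dimensional constants absorbed into $C$ and $\sigma$.
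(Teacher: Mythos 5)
Your proof is correct and follows essentially the same route as the paper: apply Lemma~\ref{L3.1} with $r_1 = r/2$, $r_2 = r$, use Jensen's inequality for the convex function $g$ on the annulus $B_r \setminus B_{r/2}$, and combine with the monotonicity of $g$ to obtain~\eqref{L3.2.1}. The constant bookkeeping ($\sigma$, $C$ depending only on $m$, $n$, $A$) is handled correctly, so there is nothing to add.
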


\begin{proof}
From Lemma~\ref{L3.1} with $r_1 = r / 2$ and $r_2 = r$, it follows that
$$
	\frac{
		1
	}{
		\mes B_r \setminus B_{r / 2}
	}
	\int_{
		B_r
		\setminus
		B_{r / 2}
	}
	|u|
	\,
	dx
	\ge
	\frac{
		\sigma
	}{
		r^{n - m}
	}
	\int_{
		B_{r / 2}
	}
	g (|u|)
	\,
	dx.
$$
Since $g$ is a non-decreasing function, this yields
$$
	g
	\left(
		\frac{
			1
		}{
			\mes B_r \setminus B_{r / 2}
		}
		\int_{
			B_r
			\setminus
			B_{r / 2}
		}
		|u|
		\,
		dx
	\right)
	\ge
	g
	\left(
		\frac{
			\sigma
		}{
			r^{n - m}
		}
		\int_{
			B_{r / 2}
		}
		g (|u|)
		\,
		dx
	\right).
$$
Due to the convexity of the function $g$, we also have
$$
	\frac{
		1
	}{
		\mes B_r \setminus B_{r / 2}
	}
	\int_{
		B_r \setminus B_{r / 2}
	}
	g (|u|)
	\,
	dx
	\ge
	g
	\left(
		\frac{
			1
		}{
			\mes B_r \setminus B_{r / 2}
		}
		\int_{
			B_r \setminus B_{r / 2}
		}
		|u|
		\,
		dx
	\right).
$$
Thus, combining the last two inequalities, one can conclude that
$$
	\frac{
		1
	}{
		\mes B_r \setminus B_{r / 2}
	}
	\int_{
		B_r \setminus B_{r / 2}
	}
	g (|u|)
	\,
	dx
	\ge
	g
	\left(
		\frac{
			\sigma
		}{
			r^{n - m}
		}
		\int_{
			B_{r / 2}
		}
		g (|u|)
		\,
		dx
	\right)
$$
for all real numbers $r > 0$. This immediately implies~\eqref{L3.2.1}.
\end{proof}

\begin{proof}[Proof of Theorem~$\ref{T2.1}$] 
Assume the converse. Let $u$ be a nontrival global weak solution of~\eqref{1.1}. 
By Lemma~\ref{L3.1}, 
$$
	\frac{
		E (r)
	}{
		r^{n-m}
	}
	\le
	\frac{C}{r^n}
	\int_{
		B_{2 r}
		\setminus
		B_r
	}
	|u|
	\,
	dx
$$
for all real numbers $r > 0$, whence in accordance with Theorem~\ref{T3.1} it follows that
$$
	\lim_{r \to \infty}
	\frac{
		E (r)
	}{
		r^{n-m}
	}
	=
	0.
$$
Take a real number $r_0 > 0$ such that $E (r_0) > 0$. We put $r_i = 2^i r_0$, $i = 1,2,\ldots$.
Obviously, there are sequences of integers $0 < s_i < l_i \le s_{i+1}$, $i = 1,2,\ldots$, such that
$$
	\frac{
		E (r_j)
	}{
		r_j^{n - m}
	}
	>
	\frac{
		E (r_{j+1})
	}{
		r_{j+1}^{n - m}
	}
$$
for all $j \in \Xi$ and, moreover,
$$
	\frac{
		E (r_j)
	}{
		r_j^{n - m}
	}
	\le
	\frac{
		E (r_{j+1})
	}{
		r_{j+1}^{n - m}
	}
$$
for all $j \not\in \Xi$, where 
$$
	\Xi = \bigcup_{i=1}^\infty [s_i, l_i).
$$
Since $E$ is a non-decreasing function,
we obtain
\begin{equation}
	\frac{
		2^{n - m} 
		E (r_{j+1})
	}{
		r_{j+1}^{n - m}
	}
	\ge
	\frac{
		E (r_j)
	}{
		r_j^{n - m}
	}
	>
	\frac{
		E (r_{j+1})
	}{
		r_{j+1}^{n - m}
	}
	\label{PT2.1.1}
\end{equation}
for all $j \in \Xi$.
By Lemma~\ref{L3.1},
$$
	E (r_{j+1}) - E (r_j)
	\ge
	C
	r_j^n
	g 
	(
		\sigma
		r_j^{- n + m} 
		E (r_j)
	),
$$
whence it follows that
$$
	\frac{
		E (r_{j+1}) - E (r_j)
	}{
		E^{
			n / (n - m)
		}
		(r_j)
	}
	\ge
	C
	h 
	(
		\sigma 
		r_j^{
			- n + m
		} 
		E (r_j)
	)
$$
for all $j = 1,2,\ldots$, where
$$
	h (\zeta)
	=
	\frac{
		g (\zeta)
	}{
		\zeta^{
			n / (n - m)
		}
	}.
$$
Multiplying this by the inequality
$$
	1 
	\ge
	\frac{
		r_j^{-n + m} E (r_j) - r_{j+1}^{-n + m} E (r_{j+1})
	}{	
		r_j^{-n + m} E (r_j)
	},
$$
we have
\begin{equation}
	\frac{
		E (r_{j+1}) - E (r_j)
	}{
		E^{
			n / (n - m)
		}
		(r_j)
	}
	\ge
	C
	\frac{
		h 
		(
			\sigma 
			r_j^{
				- n + m
			} 
			E (r_j)
		)
	}{
		r_j^{-n + m} E (r_j)
	}
	\left(
		r_j^{-n + m} E (r_j) - r_{j+1}^{-n + m} E (r_{j+1})
	\right)
	\label{PT2.1.2}
\end{equation}
for all $j \in \Xi$. 
In view of~\eqref{PT2.1.1} and the monotonicity of the function $E$, one can assert that
$$
	E (r_{j+1})
	\ge
	E (r_j)
	>
	\frac{
		1
	}{
		2^{n - m}
	}
	E (r_{j+1})
$$
for all $j \in \Xi$.
Consequently, we have
$$
	\int_{
		E (r_j)
	}^{
		E (r_{j+1})
	}
	\frac{
		d\zeta
	}{
		\zeta^{
			n / (n - m)
		}
	}
	\ge
	C
	\frac{
		E (r_{j+1}) - E (r_j)
	}{
		E^{
			n / (n - m)
		}
		(r_j)
	}
$$
for all $j \in \Xi$.
It can be also seen that~\eqref{PT2.1.1} implies that
$$
	\frac{
		h 
		(
			\sigma 
			r_j^{
				- n + m
			} 
			E (r_j)
		)
	}{
		r_j^{-n + m} E (r_j)
	}
	\left(
		r_j^{-n + m} E (r_j) - r_{j+1}^{-n + m} E (r_{j+1})
	\right)
	\ge
	C
	\int_{
		r_{j+1}^{-n + m} E (r_{j+1})
	}^{
		r_j^{-n + m} E (r_j)
	}
	\frac{
		\tilde h (\sigma \zeta)
	}{
		\zeta
	}
	d \zeta
$$
for all $j \in \Xi$, where
$$
	\tilde h (\zeta)
	=
	\inf_{
		(\zeta, \, 2^{n - m} \zeta)
	}
	h.
$$
Thus, taking into account~\eqref{PT2.1.2}, we obtain
$$
	\int_{
		E (r_j)
	}^{
		E (r_{j+1})
	}
	\frac{
		d\zeta
	}{
		\zeta^{
			n / (n - m)
		}
	}
	\ge
	C
	\int_{
		r_{j+1}^{-n + m} E (r_{j+1})
	}^{
		r_j^{-n + m} E (r_j)
	}
	\frac{
		\tilde h (\sigma \zeta)
	}{
		\zeta
	}
	d \zeta
$$
for all $j \in \Xi$.
In its turn, summing the last expression over all $j \in \Xi$, we arrive at the relation
$$
	\sum_{i=1}^\infty
	\int_{
		E (r_{s_i})
	}^{
		E (r_{l_i})
	}
	\frac{
		d\zeta
	}{
		\zeta^{
			n / (n - m)
		}
	}
	\ge
	C
	\sum_{i=1}^\infty
	\int_{
		r_{l_i}^{-n + m} E (r_{l_i})
	}^{
		r_{s_i}^{-n + m} E (r_{s_i})
	}
	\frac{
		\tilde h (\sigma \zeta)
	}{
		\zeta
	}
	d \zeta.
$$
Since 
$
	E (r_{s_{i+1}}) \ge E (r_{l_i}) 
$
and
$
	r_{l_i}^{-n + m}  
	E (r_{l_i})
	\le
	r_{s_{i+1}}^{-n + m}  
	E (r_{s_{i+1}})
$
for all integers $i > 1$ and, moreover, 
$$
	\lim_{i \to \infty}
	r_{l_i}^{-n + m}  
	E (r_{l_i})
	=
	0,
$$
this implies the estimate
\begin{equation}
	\int_{
		E (r_{s_1})
	}^\infty
	\frac{
		d\zeta
	}{
		\zeta^{
			n / (n - m)
		}
	}
	\ge
	C
	\int_0^{
		r_{s_1}^{-n + m} E (r_{s_1})
	}
	\frac{
		\tilde h (\sigma \zeta)
	}{
		\zeta
	}
	d \zeta.
	\label{PT2.1.3}
\end{equation}
It is obvious that $E (r_{s_1}) \ge  E (r_0) > 0$; therefore,
$$
	\int_{
		E (r_{s_1})
	}^\infty
	\frac{
		d\zeta
	}{
		\zeta^{
			n / (n - m)
		}
	}
	=
	\frac{m}{n - m}
	E^{- m / (n - m)} (r_{s_1})
	<
	\infty.
$$
At the same time, from the monotonicity of the function $g$, it follows that
$$
	\tilde h (\zeta)
	\ge
	\frac{1}{2^n}
	\frac{
		g (\zeta)
	}{
		\zeta^{
			n / (n - m)
		}
	}.
$$
Thus,~\eqref{PT2.1.3} leads to the inequality
$$
	\int_0^{
		r_{s_1}^{-n + m} E (r_{s_1})
	}
	\frac{
		g (\sigma \zeta)
		\,
		d \zeta
	}{
		\zeta^{
			1 + n / (n - m)
		}
	}
	<
	\infty
$$
which contradicts~\eqref{T2.1.2}.
The proof is completed.
\end{proof}

\end{document}